 \newtheorem{thm}{Theorem}[section]
 \newtheorem{cor}[thm]{Corollary}
 \newtheorem{lem}[thm]{Lemma}
 \newtheorem{prop}[thm]{Proposition}
 \theoremstyle{definition}
 \newtheorem{defn}[thm]{Definition}
 \numberwithin{equation}{subsection}
  \newtheorem{claim}[thm]{Claim}
 \newcommand{\N}{\mathbb{N}}
 \newcommand{\norm}[1]{\left\Vert#1\right\Vert}
\newcommand{\bp}{\begin{problem}}
\newcommand{\ep}{\end{problem}}
\newcommand{\ben}{\begin{enumerate}}
\newcommand{\een}{\end{enumerate}}
\newcommand{\valu}[1]{\left| #1 \right|}
\begin{document}

\title{A Coanalytic Rank on Super-Ergodic Operators}
\author{Mohammed Yahdi}

\address{Department of  Mathematics and
Computer Science\\
Ursinus College\\
Collegeville,  PA 19426, USA}

\email{myahdi@ursinus.edu}

\subjclass[2000]{Primary 47A35, 54H05; Secondary 46B08, 03E15}

\keywords{Super-ergodic operator, Borel set, Coanalytic rank, Banach
space}

\maketitle

\begin{abstract}
Techniques from Descriptive Set Theory are applied in order to
study the Topological Complexity of families of operators
naturally connected to ergodic operators in infinite dimensional
Banach Spaces. The families of ergodic, uniform-ergodic,
 Cesaro-bounded and power-bounded operators are shown to be Borel sets, while the family of super-ergodic operators is shown
to be either coanalytic or Borel according to specific structures of the
Space. Moreover, trees and coanalytic ranks are introduced
to characterize super-ergodic operators as well as spaces where
the above classes of operators do not coincide.

\end{abstract}


\section{Introduction}

Let  $T$ be a bounded operator on an infinite dimensional Banach
space $X$, and let {$
A_n = \frac{1}{n}
\sum_{k=0}^{n-1} T^k$} be  the \textit{$n^{th}$-Cesaro-mean} of $T$.
Consider the following definitions:
\begin{itemize}

  \item $T$ is ergodic if the sequence $\{A_n\}_{x\geq1}$ 
  converges in the space of operators $L(X)$ equipped with the
  strong operator topology $S_{op}$
  \item $T$ is uniformly ergodic if the sequence $\{A_n\}_{x\geq1}$
  converges in $L(X)$ equipped with its natural norm.
  \item $T$ is weakly ergodic if for any $x\in X$, the sequence
  $\{A_n(x)\}_{x\geq1}$ weakly converges in $X$.
  \item $T$ is Cesaro-bounded if the norms of   $\{A_n\}_n$ are uniformly
  bounded.
  \item $T$ is power-bounded if the  norms of $\{T^n\}_n$ are uniformly
  bounded.
\end{itemize}

The definition of a super-ergodic operator is introduced in
\cite{yahdi3} as the super property associated with ergodic
operators. Let $\ell^{\infty}(X)$ be the Banach space of bounded
sequences  in $X$ and let
  $\mathcal{C_{\mathcal{U}}}(X)$ be the subspace  of sequences $\{x_n\}_n$ such that:
  {$
\lim_{\mathcal{U}} \norm{x_n}=0$}.
$T$ is super-ergodic if for any
 ultrafilter $\mathcal{U}$ on $\mathbb{N}$, the ultrapower operator
$T_{\mathcal{U}}$ is
 ergodic on  the ultraproduct
 {$\displaystyle X_\mathcal{U}:=  \ell^{\infty}(X) / \mathcal{C_{\mathcal{U}}}(X)  $},
where
 \[
T_{\mathcal{U}}(\overline{x})  := \{Tx_n\}_n +
\mathcal{C_{\mathcal{U}}}(X) \quad \text{ for any } \overline{x}:=
\{x_n\}_n + \mathcal{C_{\mathcal{U}}}(X).
 \]

In \cite{yahdi3} it was shown  that uniformly ergodic implies
super-ergodic, which in turn implies ergodic. Examples were given
to show that these implications are strict. An interesting question is to
 determine the structures of the Banach space that place
strong
  limits on these relationships that
  might be possible instead of just particular examples.
Translations of certain mathematical concepts to families of sets, then
examining their positions in the descriptive set hierarchy have been
proven to be a productive approach (see \cite{yahdi1} and
\cite{yahdi2}). For example,  \cite{yahdi2} investigates the
   ``family of  superstable operators'' where its topological complexity is shown to be connected
    to the structure of the space; such as having some kind of unconditional
  basis or being hereditarily indecomposable. In particular, this produced families of spaces where stable, superstable and uniformly-stable operators are either equivalent or strictly separated.
    The important results in  \cite{Rose}, \cite{gowers2} and \cite{maurey1}
    show the power of some applications of descriptive set theory.

In this work, some techniques from \cite{yahdi2} are applied to sets related to
classes of ergodic operators. For the particular class of super-ergodic
operators,  a characterization in terms of
\textit{``trees''} is introduced and more descriptive set theory tools
are applied. It is shown that the height of these trees is a
coanalytic rank, thus proving that the set of
super-ergodic operators is coanalytic for the strong operator
topology.  All the other families of operators are shown to be
Borel. In particular, this gives a general class of Banach spaces
for which super-ergodic is strictly stronger than ergodic and strictly weaker
than uniformly ergodic.
Moreover,  examples are given to show the
existence of such spaces.

Throughout this work, $X$ denotes an infinite dimensional  Banach
space with a norm $\norm{.}$.

\section{Application of descriptive set theory}

In a Polish space $P$,
  we consider the natural  hierarchy {$ \Pi^0_{\xi} $}
and $\Sigma^0_{\xi}$ covering all Borel subsets starting from the
open and closed sets for $\xi=1$  to more complex ones  defined
by induction on the countable ordinals $\xi$; where a
$\Sigma^0_{\xi}$  is a countable union of $\Pi^0_{\xi-1}
  $ sets, a $\Pi^0_{\xi}$ is a  countable intersection of $\Sigma^0_{\xi-1} $ sets, and for
 $\alpha$ a limit ordinal:
 \[
 \Sigma^0_{\alpha} = \bigcup_{\xi<\alpha} \Pi^0_{\xi}
 \quad { and } \quad
 \Pi^0_{\alpha} = \bigcap_{\xi<\alpha} \Sigma^0_{\xi}
 \]

Borel sets are not the only subsets in a Polish space that are
constructible  from open and closed sets  (see \cite{godefroy},
\cite{christensen} or \cite{Kechris}):
\begin{itemize}
  \item $A \subset P$ is analytic if it is a continuous image of a Polish
  space.

  \item $A \subset P$ is coanalytic  if $P\setminus A$ is analytic.

\end{itemize}

 The space
$L(X)$ of bounded operators equipped with the strong operator
topology $S_{op}$
is not a Polish space since it is not a Baire space. However, it is
also possible to work in a standard Borel space; i.e., a space
Borel isomorphic  to a Borel set of a Polish space.

\begin{prop}
For any separable  Banach space $X$,  $L(X)$ of bounded operators is a standard Borel space when equipped with the
$\sigma$-algebra $\sigma(S_{op}\big{)}$ generated by the strong operator topology.
\end{prop}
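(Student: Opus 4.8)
The plan is to realize $(L(X),\sigma(S_{op}))$ explicitly as a Borel subset of a Polish space by coding each operator through its values on a fixed countable dense set. Fix a countable $\mathbb{Q}$-linear (in the complex case, $(\mathbb{Q}+i\mathbb{Q})$-linear) dense subspace $D = \{x_n\}_{n\in\N}$ of $X$, and define
\[
\Phi \colon L(X) \To X^{\N}, \qquad \Phi(T) = (Tx_n)_{n\in\N}.
\]
Since $X^{\N}$ is a countable product of the Polish space $X$, it is itself Polish, and $\Phi$ is injective because two bounded operators agreeing on the dense set $D$ agree everywhere. The strategy is then to prove that $\Phi$ is a Borel isomorphism of $(L(X),\sigma(S_{op}))$ onto its image and that this image is a Borel subset of $X^{\N}$.

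First I would pin down the $\sigma$-algebra. Writing $\mathrm{ev}_x \colon T \mapsto Tx$ for the evaluation maps, set $\A := \sigma(\mathrm{ev}_{x_n} : n\in\N)$, the $\sigma$-algebra generated by the countably many coordinate maps of $\Phi$. Each $\mathrm{ev}_{x_n}$ is $S_{op}$-continuous, so $\A \subseteq \sigma(S_{op})$ immediately. For the reverse inclusion I would exploit the decomposition $L(X) = \bigcup_{N} B_N$ with $B_N = \{T : \norm{T}\le N\}$. Each ball lies in $\A$, since by density $\norm{T} = \sup_{x_n\neq 0}\norm{Tx_n}/\norm{x_n}$, whence $B_N = \bigcap_n \mathrm{ev}_{x_n}^{-1}\big(\{z : \norm{z}\le N\norm{x_n}\}\big)$. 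On $B_N$, uniform boundedness together with density forces $S_{op}$-convergence to be detected already on $D$, so the restriction of $S_{op}$ to $B_N$ coincides with the second-countable metrizable topology induced by $\Phi$, and its Borel sets are precisely the traces of $\A$. Consequently, for any $S_{op}$-open set $V$, each $V\cap B_N$ belongs to $\A$, and therefore $V = \bigcup_N (V\cap B_N) \in \A$. This yields $\sigma(S_{op}) = \A$.

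Next I would record the identity $\A = \Phi^{-1}(\mathcal{B}(X^{\N}))$: because $\A$ is generated by the maps $\mathrm{ev}_{x_n} = \pi_n\circ\Phi$ and the coordinate projections $\pi_n$ generate $\mathcal{B}(X^{\N})$, pulling the Borel $\sigma$-algebra back through the single map $\Phi$ gives this equality. Since $\Phi$ is injective, it carries $\A$-measurable sets bijectively onto the traces on $\Phi(L(X))$ of Borel subsets of $X^{\N}$, so $\Phi$ is a Borel isomorphism onto its image.

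Finally, I would verify that $M := \Phi(L(X))$ is Borel. A sequence $(y_n)\in X^{\N}$ lies in $M$ exactly when the assignment $x_n\mapsto y_n$ extends to a bounded operator, which happens if and only if it respects the $\mathbb{Q}$-linear relations of $D$ and is bounded on $D$; indeed, a $\mathbb{Q}$-linear bounded map on the dense subspace $D$ extends uniquely to a member of $L(X)$. The first requirement, $\bigcap \{(y_n): y_k = \alpha y_i + \beta y_j\}$ over all rationals $\alpha,\beta$ and indices with $x_k = \alpha x_i + \beta x_j$, is a countable intersection of closed sets, while the second, $\bigcup_{C\in\N}\bigcap_n \{(y_n): \norm{y_n}\le C\norm{x_n}\}$, is $F_\sigma$; hence $M$ is Borel. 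Being a Borel subset of a Polish space, $M$ is a standard Borel space, and the Borel isomorphism $\Phi$ transports this structure back to $(L(X),\sigma(S_{op}))$. I expect the reverse $\sigma$-algebra inclusion in the second step—reconciling the globally non-second-countable $S_{op}$ with the countably generated $\A$—to be the main obstacle, which is precisely what the ball decomposition is designed to overcome.
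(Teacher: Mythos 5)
Your proposal is correct, but it takes a genuinely different route from the paper. The paper's proof is essentially a two-line citation argument: it invokes the known fact (referring to \cite{Kechris} and \cite{yahdi2}) that each ball $L_n(X)$ of operators of norm at most $n$ is Polish in the strong operator topology, writes $L(X)=\bigcup_n L_{n+1}(X)\setminus L_n(X)$, and concludes that $L(X)$ with $\sigma(S_{op})$ is standard Borel as a countable union of Borel pieces of Polish spaces. You instead construct an explicit global embedding $\Phi:T\mapsto (Tx_n)_n$ into the Polish space $X^{\N}$ over a countable dense $\mathbb{Q}$-linear subspace $D$, identify $\sigma(S_{op})$ with the $\sigma$-algebra $\A$ generated by the evaluation maps, and recognize the image as a Borel (in fact $F_\sigma$: the intersection of the closed set of sequences respecting the $\mathbb{Q}$-linear relations of $D$ with the $F_\sigma$ boundedness condition) set of codes of bounded $\mathbb{Q}$-linear maps, which extend uniquely to elements of $L(X)$. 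The core technical fact is the same in both arguments --- on a norm-bounded set, $S_{op}$ is second countable, metrizable, and determined by convergence on a dense set --- but you deploy it through the ball decomposition $L(X)=\bigcup_N B_N$ precisely to reconcile the globally non-second-countable $S_{op}$ with the countably generated $\A$, a reconciliation the paper leaves implicit when matching the trace of $\sigma(S_{op})$ with the Polish Borel structure on each $L_n(X)$. Your version buys a concrete realization of the standard Borel structure inside $X^{\N}$ (convenient if one later wants to compute complexities of operator families by explicit coding) and is fully self-contained; the paper's version buys brevity by citation. All the delicate steps in your write-up (injectivity of $\Phi$, $B_N\in\A$ via $\norm{T}=\sup_{x_n\neq 0}\norm{Tx_n}/\norm{x_n}$, the identity $\A=\Phi^{-1}(\mathcal{B}(X^{\N}))$, and the extension of a bounded $\mathbb{Q}$-linear map on $D$ to a bounded operator) check out.
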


\begin{proof}
Let $X$ be a separable Banach space. It is well-known that with the strong operator topology the spaces $L_n(X)$ of operators of norm $\leq n $ is a Polish space in which $T  \longmapsto T^n$ is clearly continuous (see, e.g., page 14 in \cite{Kechris} or lemma 3. in  \cite{yahdi2}). So $L(X)= \bigcup_n L_{n+1}(X)\setminus L_n(X)$ is clearly standard Borel when equipped with the $\sigma$-algebra generated by the strong operator topology.
\end{proof}

The following lemma is a modified  result from \cite{Al-Bo}.

\begin{lem} \label{lemme:caract de E}
Let $T$  be a  Cesaro-bounded operator on a  Banach space $X$, and
let $\{A_n\}_{n \in \mathbb{N}}$ be the corresponding sequence of
 Cesaro-means. Then $T$ is ergodic if and only if the sequence
  $\{A_nx\}_{n}$
 converges in norm  for all $x$ in a  dense  (in norm) subset of $X$.
\end{lem}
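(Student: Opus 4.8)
The plan is to observe that one implication is immediate and that all the content lies in the converse, which is a standard uniform-boundedness argument made possible precisely by the Cesaro-boundedness hypothesis.

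First, the forward direction is trivial: recall that $T$ is ergodic means $\{A_n\}_n$ converges in the strong operator topology $S_{op}$, i.e. $\{A_n x\}_n$ converges in norm for \emph{every} $x \in X$; restricting to any dense subset gives norm-convergence there. For the converse, I would set $M := \sup_n \norm{A_n}$, which is finite by Cesaro-boundedness, and let $D \subset X$ be the dense subset on which $\{A_n x\}_n$ converges in norm. The key step is a $3\eps$-estimate: given $x \in X$ and $\eps > 0$, choose $d \in D$ with $\norm{x - d} < \eps/(3M+1)$ and write
\[
\norm{A_n x - A_m x} \leq \norm{A_n}\,\norm{x-d} + \norm{A_n d - A_m d} + \norm{A_m}\,\norm{d-x}.
\]
The two outer terms are each at most $M\eps/(3M+1) < \eps/3$ \emph{uniformly} in $n$ and $m$, while the middle term is below $\eps/3$ for all large $n,m$ because $\{A_n d\}_n$ is Cauchy. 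Hence $\{A_n x\}_n$ is Cauchy in $X$, and since $X$ is complete it converges in norm. As $x \in X$ was arbitrary, $\{A_n x\}_n$ converges in norm for every $x$, which is exactly convergence of $\{A_n\}_n$ in $S_{op}$; thus $T$ is ergodic.

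Equivalently, one may phrase the core of the argument conceptually: the convergence set $C := \{x \in X : \{A_n x\}_n \text{ converges in norm}\}$ is a linear subspace, and the bound $M$ forces it to be norm-closed, since the very same estimate (with $d$ replaced by an approximating element of $C$) shows membership in $C$ is preserved under norm limits. Then $C \supseteq D$ together with $\overline{D} = X$ yields $C = \overline{C} \supseteq \overline{D} = X$.

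I do not expect a genuine obstacle here; the only point worth emphasizing is that Cesaro-boundedness is not a technical convenience but the essential hypothesis. It is exactly what makes the outer terms controllable uniformly in $n$ and $m$, and without such a uniform bound convergence on a dense set would not in general propagate to all of $X$.
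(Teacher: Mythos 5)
Your proof is correct and follows essentially the same route as the paper: the same density-plus-Cesaro-boundedness $3\eps$ triangle-inequality estimate showing $\{A_n x\}_n$ is Cauchy for arbitrary $x$, with your closed-subspace reformulation being only a repackaging of that argument. If anything, your version is slightly cleaner, since you explicitly take the approximating point $d$ in the dense set $D$ (the paper's proof writes $x \in X$ where it needs $x \in E$) and you also record the trivial forward implication, which the paper omits.
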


\begin{proof}
Put
 {$\displaystyle E:=\{ x \in X: \{A_nx\}_{n \in \mathbb{N}} \hbox{ norm-converges in
} X\}.$}
Suppose that $E$ is dense in $X$ and that {$\displaystyle  M:=
\sup_{n \in \mathbb{N}} \norm{A_n} < \infty $}. \\
Let  $y \in X$
be fixed.
 For $
\varepsilon
> 0 $, consider  $ x \in X $ such that ~$ \norm{x-y} <
\varepsilon $. Then, there exits  $n_0 \in \mathbb{N}$ such that
{$\displaystyle ~ \norm{A_nx - A_mx} < \varepsilon, ~ \forall n, m
\geq n_0 . $}  Thus, for all   $n, m \geq n_0 $ ~ we have
\begin{align*}
\norm{A_ny - A_my} &\leq \norm{A_n(y-x)} + \norm{A_nx - A_mx} + \norm{A_m(x - y)} \\
                                                                            &\leq (2M+1) \varepsilon .
\end{align*}
Therefore the sequence $\{A_ny\}_{n \in \mathbb{N}}$ is  Cauchy,
and hence
 $y \in E$.

\end{proof}

Denote by  $\mathcal{E}(X)$, $\mathcal{SE}(X)$, $\mathcal{UE}(X)$,
$L_{cb}(X)$ and $L_{pb}(X)$,
 respectively the subsets of  $L(X)$ of
  ergodic,
super-ergodic, uniformly  ergodic, Cesaro-bounded and power bounded
operators on $X$.

\begin{prop} \label{prop:Cesaro-borne borelien}
For any separable Banach space $X$, the sets  $L_{pb}(X)$ of
power-bounded operators, $L_{cb}(X)$ of Cesaro-bounded operators, $
\mathcal{E}(X) $ of ergodic operators and  $\mathcal{UE}(X)$ of
uniformly ergodic operators
 are all Borel in $\big{(}L(X),
\sigma(S_{op})\big{)}$.
\end{prop}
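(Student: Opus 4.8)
The plan is to show each of the four sets is Borel by expressing membership via countable combinations of conditions that are Borel in the standard Borel structure $\sigma(S_{op})$. The key enabling fact, from the preceding proposition, is that $L(X) = \bigcup_n \bigl(L_{n+1}(X)\setminus L_n(X)\bigr)$ where each $L_n(X)$ is Polish under $S_{op}$, and on each such piece the maps $T \mapsto T^k$ are continuous. Fix a countable norm-dense set $\{x_i\}_{i\in\N} \subset X$ and a countable norm-dense set $\{f_j\}_j$ in a norm-dense subset of $X^*$ (or use rational-ball conditions); these will let me replace ``for all $x$'' and supremum/limit conditions by countable quantifications over $i,j$ together with rational thresholds, which is exactly what keeps everything inside the Borel $\sigma$-algebra.

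First I would handle $L_{pb}(X)$ and $L_{cb}(X)$, which are the most direct. Power-boundedness says $\sup_n \norm{T^n} < \infty$; writing $\norm{T^n} = \sup_i \norm{T^n x_i}$ over the dense countable set intersected with the unit ball, this becomes $\bigcup_{M\in\N}\bigcap_n\bigcap_i \{T : \norm{T^n x_i} \le M\}$. Each innermost set is closed (hence Borel) because $T \mapsto T^n x_i$ is $S_{op}$-continuous on each $L_m(X)$, so the whole set is Borel. The Cesaro-bounded case is identical with $A_n = \frac1n\sum_{k=0}^{n-1}T^k$ in place of $T^n$, noting that $T \mapsto A_n x_i$ is again continuous since it is a finite linear combination of the continuous maps $T \mapsto T^k x_i$.

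For $\mathcal{E}(X)$ I would invoke Lemma \ref{lemme:caract de E}: on the Borel set $L_{cb}(X)$, ergodicity is equivalent to norm-convergence of $\{A_n x\}_n$ for all $x$ in a dense set, hence (by density and the Cauchy criterion in the lemma's proof) equivalent to $\{A_n x_i\}_n$ being norm-Cauchy for every $i$. Thus
\[
\mathcal{E}(X) = L_{cb}(X) \cap \bigcap_i \bigcap_{p\in\N}\bigcup_{N\in\N}\bigcap_{n,m\ge N} \Bigl\{ T : \norm{A_n x_i - A_m x_i} \le \tfrac1p \Bigr\},
\]
and each bracketed set is closed by continuity of $T\mapsto A_n x_i - A_m x_i$, so $\mathcal{E}(X)$ is Borel. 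For $\mathcal{UE}(X)$, uniform ergodicity is norm-convergence of $\{A_n\}$ in operator norm, i.e. the Cauchy condition $\norm{A_n - A_m} \le \frac1p$; here I would write $\norm{A_n - A_m} = \sup_i \norm{(A_n - A_m)x_i}$ over the dense unit-ball net, turning the operator-norm condition into a countable intersection of closed sets and yielding a Borel set by the same template.

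The main obstacle is justifying that an operator norm or a power norm can be computed as a \emph{supremum over a fixed countable dense set} and that this supremum condition is Borel: the issue is that $T \mapsto \norm{T^n}$ and $T \mapsto \norm{A_n - A_m}$ are only lower semicontinuous, not continuous, in $S_{op}$. The resolution is precisely the dense-set reduction: $\norm{S} = \sup\{\norm{S x_i} : \norm{x_i}\le 1\}$ over the countable dense set converts the norm into a countable supremum of continuous functions, and an inequality $\sup_i \norm{S x_i} \le M$ is the \emph{intersection} $\bigcap_i\{\norm{S x_i}\le M\}$ of closed sets, hence Borel. Care is needed that all these descriptions are carried out piece-by-piece on each Polish slice $L_m(X)$ and then reassembled, since $\sigma(S_{op})$ is built from the Borel structure of those slices; once that bookkeeping is in place every set above is a countable Boolean combination of closed sets and therefore Borel.
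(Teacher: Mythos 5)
Your proposal is correct and follows essentially the same route as the paper's proof: a countable dense sequence in $B_X$, the decomposition of each set into countable Boolean combinations of sets defined by norm inequalities on $T^m x_i$ or $A_n x_i$, continuity of $T \mapsto T^n$ on the Polish pieces $L_m(X)$, and Lemma \ref{lemme:caract de E} to reduce ergodicity to the Cauchy condition on the dense sequence within $L_{cb}(X)$. Your explicit reduction of the operator-norm Cauchy condition for $\mathcal{UE}(X)$ to $\sup_i \norm{(A_n - A_m)x_i}$ merely spells out what the paper compresses into ``the same arguments,'' so there is no substantive difference.
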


\begin{proof}

Let $\{x_n\}_{n \in \mathbb{N}}$ be a  dense sequence in the unit
closed ball $B_X$ of $X$. It is not difficult to show that
\[
L_{pb}(X) =
\bigcup_{k \in \mathbb{N}} \bigcap_{m \in \mathbb{N}}
  \bigcap_{n \in \mathbb{N}}
   \big{\{} T \in L(X); ~ \norm{T^m x_n} \leq k \big{\}} .
   \]
It follows from the continuity of $T  \longmapsto T^n$ that $L_{pb}(X)$
is  $F_\sigma$ in  $\big{(}L(X), \sigma(S_{op})\big{)}$.
Similarly for the set $L_{cb}(X)$ using the  $S_{op}$-continuity
on bounded subsets of $L(X)$  of the maps {$
 R   \longmapsto   \frac{1}{n} \sum_{i=0}^{n-1} R^i
$}.

 Let
$T$ be a bounded operator on $X$. By lemma \ref{lemme:caract de E},
$T$ is ergodic if and only if $T$ is   Cesaro-bounded and
 the sequence  {$
 \{ (\frac{1}{n} \sum_{i=0}^{n-1}
T^i)x_k\}_{n} $} is norm-Cauchy for all $k \in \mathbb{N}$; i.e.
\[
\forall  \varepsilon >0, \exists N \in \mathbb{N}: \forall n,m
\geq N \quad
                        \norm{ \frac{1}{n} \sum_{i=0}^{n-1} T^ix_k -
\frac{1}{m} \sum_{i=0}^{m-1} T^ix_k} < \varepsilon.
\]
In other terms,
\[ T \in \bigcap_{k \in  \mathbb{N}} \bigcap_{p \in
\mathbb{N}} \bigcup_{N \in  \mathbb{N}} \bigcap_{n,m \geq N}
         \Big{\{} R \in L(X): \norm{ \frac{1}{n} \sum_{i=0}^{n-1} R^ix_k -
\frac{1}{m} \sum_{i=0}^{m-1} R^ix_k} < \frac{1}{p} \Big{\}}.
\]
This  proves the  result for
 $\mathcal{E}(X)$  using
  the continuity of $T  \longmapsto T^n$.

The same arguments prove the result for  $\mathcal{UE}(X)$ since $T$ is
uniformly ergodic if and only if $T$ is Cesaro-bounded and the sequence
$\{A_n\}_{n\in \N}$ of its Cesaro-means is Cauchy for the norm of
$L(X)$.
\end{proof}

\section{Set of super-ergodic operators}

 First, an entropy-tree will be defined to help
 characterize the super-ergodicity of an operator.
For an ultrafilter $\mathcal{U}$  on $\N$, denote by
$A^\mathcal{U}_n$  the $n^{th}$ Cesaro-mean of the ultrapower
$T_\mathcal{U}$:
\[
A^\mathcal{U}_n=\frac{1}{n} \sum_{k=0}^{n-1} T^k_\mathcal{U}
 \]
By definition, an operator $T$  is not super-ergodic if and only if there exist an ultrafilter
 $\mathcal{U}$ and
$\bar{x} \in  X_\mathcal{U}$ such that $ \{A^\mathcal{U}_n
\bar{x}\}_{n}$ does not converge,  i.e.,
\[
\exists \bar{x} \in B_{X_\mathcal{U}}, \exists \varepsilon>0,
\exists J = \{j_p\}_{p} \in \N^{\uparrow\N}:  \forall p \in \N,
\norm{A^\mathcal{U}_{j_p} \bar{x} - A^\mathcal{U}_{j_{p+1}}
\bar{x}}_{X_\mathcal{U}}> \varepsilon
\]
where $\N^{\uparrow\N}$ is the set of infinite and strictly
increasing sequences of $\N$.

\noindent
 Let $(x_n)_{n \in \N} \in \bar{x}$ be chosen in the unit ball $B_X$. The condition
\[
\forall p \in \N, \quad \norm{A^\mathcal{U}_{j_p} \bar{x} -
A^\mathcal{U}_{j_{p+1}} \bar{x}}_{X_\mathcal{U}}> \varepsilon
\]
is then equivalent to
\[
\forall p \in \N, ~\exists E_p \in \mathcal{U}:~Ê~Ê \forall n \in
E_p,\quad \norm{A_{j_p} x_n - A_{j_{p+1}} x_n}> \varepsilon,
\]
or again, by using  {$\displaystyle E_m=\bigcap_{p\leq m}E_p$},
\[
\forall m \in \N, ~\exists E_m \in \mathcal{U}:~Ê~Ê \forall n \in
E_m,\quad \norm{A_{j_p} x_n - A_{j_{p+1}} x_n}> \varepsilon  ~
\hbox{ } \forall p \leq m.
\]
This implies in particular that
\[
\forall m \in \N, ~\exists x_m \in B_X:\quad \norm{A_{j_p} x_m -
A_{j_{p+1}} x_m}> \varepsilon  ~ \hbox{ } \forall p \leq m.
\]
Therefore, this proves that if $T$ is not super-ergodic,
then
 $T$ satisfies the following condition,  noted $\mathcal{({NSE})}$;
\[
\exists \varepsilon>0,~ \exists J=\{j_p\}_{p \in \N}
\in\N^{\uparrow\N}, ~~ \forall m \in \N, ~\exists x_m \in B_X:\]
\[
\norm{A_{j_p} x_m - A_{j_{p+1}} x_m}> \varepsilon  ~ \hbox{ }
\forall p \leq m.
\]

\begin{lem} \label{lemme:non superergo}
 $T$
is not super-ergodic if and only if  $T$ satisfies
$\mathcal{(NSE)}$.
\end{lem}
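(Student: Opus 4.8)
The forward implication—that failure of super-ergodicity forces condition $\mathcal{(NSE)}$—is exactly the chain of reductions carried out just above the statement, so it remains only to establish the converse: that $\mathcal{(NSE)}$ implies $T$ is not super-ergodic. The plan is to turn the single sequence of witnesses $(x_m)_m$ supplied by $\mathcal{(NSE)}$ directly into a non-convergent orbit of Cesaro-means in a suitable ultrapower, and so exhibit one ultrafilter $\mathcal{U}$ for which $T_{\mathcal{U}}$ fails to be ergodic.

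Assume $\mathcal{(NSE)}$ holds and fix the data $\varepsilon>0$, the strictly increasing sequence $J=\{j_p\}_{p\in\N}$, and the vectors $x_m\in B_X$ with $\norm{A_{j_p}x_m - A_{j_{p+1}}x_m}>\varepsilon$ for all $p\leq m$. Let $\mathcal{U}$ be any free (non-principal) ultrafilter on $\N$. Since $(x_n)_n$ lies in $B_X$ it defines $\bar{x}:=\{x_n\}_n + \mathcal{C_{\mathcal{U}}}(X)\in B_{X_{\mathcal{U}}}$, and because $T^k_{\mathcal{U}}$ acts coordinatewise we have $A^\mathcal{U}_{j}\bar{x} = \{A_{j}x_n\}_n + \mathcal{C_{\mathcal{U}}}(X)$; for each fixed $j$ this sequence is bounded in $n$ by $\norm{A_{j}}$, so it indeed represents an element of $X_{\mathcal{U}}$. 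Using the ultrapower norm $\norm{\{y_n\}_n + \mathcal{C_{\mathcal{U}}}(X)}_{X_{\mathcal{U}}} = \lim_{\mathcal{U}}\norm{y_n}$ gives
\[
\norm{A^\mathcal{U}_{j_p}\bar{x} - A^\mathcal{U}_{j_{p+1}}\bar{x}}_{X_{\mathcal{U}}} = \lim_{\mathcal{U}}\norm{A_{j_p}x_n - A_{j_{p+1}}x_n}.
\]

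The crux is to see that this ultrafilter limit is at least $\varepsilon$ for every $p$ simultaneously, and here the particular indexing in $\mathcal{(NSE)}$ does the work. Fix $p$; for every $n\geq p$ the witness $x_n$ satisfies the inequality at level $p$ (since $p\leq n$), so $\norm{A_{j_p}x_n - A_{j_{p+1}}x_n}>\varepsilon$ holds on the cofinite set $\{n: n\geq p\}$. As $\mathcal{U}$ is free, every cofinite set belongs to $\mathcal{U}$, whence $\lim_{\mathcal{U}}\norm{A_{j_p}x_n - A_{j_{p+1}}x_n}\geq\varepsilon$. Thus $\norm{A^\mathcal{U}_{j_p}\bar{x} - A^\mathcal{U}_{j_{p+1}}\bar{x}}_{X_{\mathcal{U}}}\geq\varepsilon$ for all $p$, so consecutive terms of the subsequence $\{A^\mathcal{U}_{j_p}\bar{x}\}_p$ stay a fixed distance apart; the full sequence $\{A^\mathcal{U}_{n}\bar{x}\}_n$ is therefore not Cauchy and cannot converge. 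Hence $T_{\mathcal{U}}$ fails to be ergodic for this $\mathcal{U}$, and $T$ is not super-ergodic. (In particular $\bar{x}\neq 0$ is automatic, since a zero class would force all these norms to vanish.)

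The only genuinely delicate point is the one just exploited: a naive reading of $\mathcal{(NSE)}$ produces, for each $m$, a vector depending on $m$, and one must be sure that a single element of the ultrapower can witness non-convergence against all gaps $(j_p,j_{p+1})$ at once. The diagonal indexing ``$\forall p\leq m$'' is precisely what guarantees that, for each fixed $p$, the relevant inequality holds for cofinitely many $n$, which is exactly the hypothesis a free ultrafilter can convert into a lower bound on $\lim_{\mathcal{U}}$. No appeal to Cesaro-boundedness is needed for the converse, since each fixed Cesaro-mean is automatically a bounded operator.
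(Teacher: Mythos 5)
Your proof is correct and takes essentially the same route as the paper: the paper likewise treats the forward direction as already established by the reduction preceding the lemma, and for the converse it chooses the ultrafilter $\mathcal{U}$ containing every tail set $E_n=\{n,n+1,\dots\}$ (i.e.\ a free ultrafilter, as in your version), forms the same class $\bar{x}=(x_m)_m+\mathcal{C_{\mathcal{U}}}(X)$, and uses the diagonal indexing $\forall p\leq m$ to conclude $\norm{A^{\mathcal{U}}_{j_p}\bar{x}-A^{\mathcal{U}}_{j_{p+1}}\bar{x}}_{X_{\mathcal{U}}}\geq\varepsilon$ for every $p$, so the Cesaro-means of $T_{\mathcal{U}}$ are not Cauchy. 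If anything, your explicit cofinite-set argument and clean lower bound of $\varepsilon$ tidy up the paper's slightly garbled intermediate estimate involving $\varepsilon/2$.
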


\begin{proof}
One direction was proved above. Suppose now that $T$ satisfies
$\mathcal{(NSE)}$.
Let $\mathcal{U}$ be the ultrafilter on $\N$ that contains all sets
 $E_n:=\{n,n+1,\dots\}$. Put  $\bar{x}=(x_m)_{m \in
\N} + C_\mathcal{U}(X)  ~ \in X_\mathcal{U}$.
The condition $\mathcal{(NSE)}$ implies that
\[
\forall m \in  E_p, \quad  \norm{A_{j_p} x_m - A_{j_{p+1}}x_m}>
\varepsilon.
\]
So, {$\displaystyle  \mathcal{U}-\lim_m \norm{A_{j_p} x_m -
A_{j_{p+1}} x_m}> \frac{\varepsilon}{2} $},  and thus
 {$\displaystyle
\norm{A_{j_p} \bar{x} - A_{j_{p+1}}\bar{x}}_\mathcal{U}>
 \varepsilon.$}\\
Since this is true for any positive integer  $p$, it follows that
the sequence
 $\big{\{}A^\mathcal{U}_n(\bar{x}) \big{\}}_{n \in \N}$
is not  Cauchy and hence  $T$ is not  super-ergodic.

\end{proof}

With lemma \ref{lemme:non superergo},  the super-ergodicity can be
described in terms of trees using the following notations:
\begin{itemize}
  \item $\N^{\uparrow<\N}$ denotes the set of finite and strictly increasing
sequences in $ \N$ as well as the empty sequence.
  \item For $s \in
\N^{\uparrow<\N}$, $s_p$ denotes the $p^{th}$ element of $s$ and
$\valu{s}$ denotes the length of $s$.
  \item For $s , s' \in
\N^{\uparrow<\N},  s \prec s'$ means that
$ \valu{s} < \valu{s'}$ and have the same first $\valu{s}$
elements.
\end{itemize}

\begin{defn} \label{def-prop:arbre ergo}
Let $X$ be a  Banach space and $T \in L(X)$ with  $A_n $ its
$n^{th}$ Cesaro-mean. For all $\varepsilon>0$,
$\mathcal{A}_e(T,\varepsilon) $ is the tree on $\N$  defined by  the set
of all elements $s \in \N^{\uparrow<\N}$ such that
\[
 \valu{s} \leq 1 \hbox { or }  ~ \exists ~ x \in B_X \hbox{ such that }
 \forall 1 \leq p <\valu{s},
 ~ \norm{A_{s_p}x - A_{s_{p+1}}x}>
\varepsilon.
\]
\end{defn}

 It follows from Lemma \ref{lemme:non
superergo} that $T$ is not super-ergodic if and only if
\[
\exists \varepsilon >0 \hbox{ and } \exists J \in \N^{\uparrow\N}
~ \hbox{ such that } \quad \forall s\prec J,
 ~ s \in \mathcal{A}(T,\varepsilon).
\]
In other terms, $T$ is not super-ergodic if and only if the tree
$\mathcal{A}_e(T,\varepsilon)$ is not well founded for certain
$\varepsilon>0$; i.e., with  infinite branches.

\begin{thm} \label{prop:indice arbre ergo}
Let $X$ be a Banach space and $T \in L(X)$. The following
assertions are equivalent:
\begin{enumerate}[(a)]
\item $T$  is super-ergodic.

\item For all $\varepsilon >0 $,\quad the tree
$\mathcal{A}_e(T,\varepsilon)$ is well founded.

\item ${\displaystyle \eta_e (T) := \sup_{\varepsilon >0}
h{(}\mathcal{A}_e(T,\varepsilon)\big{)} <\omega_1}$, where $h$
gives the height of a tree.
\end{enumerate}
\end{thm}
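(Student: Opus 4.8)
The plan is to obtain (a)$\Leftrightarrow$(b) directly from Lemma \ref{lemme:non superergo}, and then to settle (b)$\Leftrightarrow$(c) by a monotonicity argument combined with the regularity of $\omega_1$.

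For (a)$\Leftrightarrow$(b) I would simply record the contrapositive of the characterization already obtained just after Definition \ref{def-prop:arbre ergo}. By Lemma \ref{lemme:non superergo}, $T$ fails to be super-ergodic precisely when it satisfies $\mathcal{(NSE)}$, and unwinding the definition of $\mathcal{A}_e(T,\varepsilon)$ this says exactly that for some $\varepsilon>0$ there is an infinite strictly increasing $J\in\N^{\uparrow\N}$ with $s\in\mathcal{A}_e(T,\varepsilon)$ for every $s\prec J$; that is, $\mathcal{A}_e(T,\varepsilon)$ admits an infinite branch and so is not well founded. Negating both sides yields that $T$ is super-ergodic if and only if $\mathcal{A}_e(T,\varepsilon)$ is well founded for every $\varepsilon>0$, which is (b).

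The implication (c)$\Rightarrow$(b) is immediate: if $\eta_e(T)<\omega_1$ then $h(\mathcal{A}_e(T,\varepsilon))\leq\eta_e(T)<\omega_1$ for each $\varepsilon>0$, so every such tree has countable rank and is therefore well founded. The substance lies in (b)$\Rightarrow$(c). First I would observe that the family is monotone in $\varepsilon$: if $0<\varepsilon'\leq\varepsilon$, the defining inequality $\norm{A_{s_p}x-A_{s_{p+1}}x}>\varepsilon$ forces $\norm{A_{s_p}x-A_{s_{p+1}}x}>\varepsilon'$, so $\mathcal{A}_e(T,\varepsilon)\subseteq\mathcal{A}_e(T,\varepsilon')$ as trees, and hence $h(\mathcal{A}_e(T,\varepsilon))\leq h(\mathcal{A}_e(T,\varepsilon'))$. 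Consequently, given any $\varepsilon>0$ one may pick $n$ with $1/n\leq\varepsilon$ to get $h(\mathcal{A}_e(T,\varepsilon))\leq h(\mathcal{A}_e(T,1/n))$, so the supremum over all $\varepsilon>0$ collapses to the countable supremum $\eta_e(T)=\sup_{n}h(\mathcal{A}_e(T,1/n))$.

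It then remains to bound this countable supremum. Under (b) each tree $\mathcal{A}_e(T,1/n)$ is well founded on the countable set $\N$, so each height $h(\mathcal{A}_e(T,1/n))$ is a countable ordinal (see \cite{Kechris}). Since $\omega_1$ is regular, a countable supremum of countable ordinals stays countable, whence $\eta_e(T)=\sup_{n}h(\mathcal{A}_e(T,1/n))<\omega_1$, giving (c). The only genuine obstacle is exactly this reduction from the a priori uncountable index set $(0,\infty)$ to a countable one: without the nestedness of the trees there would be no reason for a supremum over uncountably many countable ordinals to remain below $\omega_1$, and it is precisely the monotonicity in $\varepsilon$ that supplies the reduction and lets the regularity of $\omega_1$ finish the argument.
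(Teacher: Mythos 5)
Your proposal is correct and follows essentially the same route as the paper: (a)$\Leftrightarrow$(b) is read off from Lemma \ref{lemme:non superergo} exactly as in the text, and your (b)$\Leftrightarrow$(c) argument is a careful expansion of the step the paper dismisses as obvious, namely the reduction via monotonicity in $\varepsilon$ to the countable family $\mathcal{A}_e(T,\frac{1}{n})$ together with the countability of heights of well-founded trees on $\N$ and the regularity of $\omega_1$. The only point worth flagging is that your (c)$\Rightarrow$(b) step tacitly uses the standard convention that an ill-founded tree on $\N$ is assigned height $\omega_1$, which is also implicit in the paper's definition of $\eta_e$.
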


\begin{proof}
 The equivalence between $(a)$ and $(b)$ was  shown earlier. The equivalence between
 $(b)$ and $(c)$ is obvious because $(b)$ is equivalent to

\begin{center}
$\forall n  \in \N, \quad \mathcal{A}_e(T,\frac{1}{n}) $ is well
founded.
\end{center}

\end{proof}

The index $\eta_e$ defined in theorem \ref{prop:indice arbre ergo} extends  to all $T \in L(X)$ by
$\eta_e (T) =
\omega_1 $, if $T$ is not super-ergodic.

\begin{thm} \label{theoreme:rang coa tree ergo}
Let $X$ be a separable Banach space, and $L(X)$ be the space of
bounded operators equipped with the  strong operator topology
$S_{op}$. Let $\eta_e$ be the index on $L(X)$ defined above. Then:

\begin{enumerate}[(a)]
  \item $T$ is super-ergodic if and only if ~  $\eta_e(T) <
\omega_1$.
  \item The set $\mathcal{SE}(X)$ of super-ergodic operators is coanalytic.
  \item $\eta_e$ is a  coanalytic rank on  $\mathcal{SE}(X)$.

\item $\exists \alpha < \omega_1$ such that the set of uniformly ergodic operators $\mathcal{UE}(X)
\subseteq \{T \in \mathcal{SE}(X); \eta_e(T) \leq \alpha\}$.

 \item
$\mathcal{SE}(X)$ is a Borel set
if and only if
{$\displaystyle \eta_e(X):= \sup_{T \in \mathcal{SE}(X)} \eta_e(T) ~
<\omega_1.$}
\end{enumerate}

\end{thm}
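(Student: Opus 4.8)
The plan is to prove the five assertions of Theorem~\ref{theoreme:rang coa tree ergo} by leveraging the tree characterization from Theorem~\ref{prop:indice arbre ergo} together with the standard machinery of coanalytic ranks on trees. The central observation is that the height function $h$ on well-founded trees is the prototypical coanalytic rank, so the real work lies in showing that the assignment $T \mapsto \mathcal{A}_e(T,\varepsilon)$ is Borel-measurable in an appropriate sense, which will transport the rank properties of $h$ to the index $\eta_e$.

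First I would dispose of $(a)$, which is immediate from Theorem~\ref{prop:indice arbre ergo}(a)$\Leftrightarrow$(c) together with the convention extending $\eta_e(T)=\omega_1$ for non-super-ergodic $T$. For $(b)$ and $(c)$, the key step is to code the family of trees. I would fix a dense sequence $\{x_n\}_n$ in $B_X$ and, using the $S_{op}$-continuity of $T \mapsto A_{k}$ on bounded sets (as in the proof of Proposition~\ref{prop:Cesaro-borne borelien}), argue that for each fixed finite sequence $s$ and each rational $\varepsilon$, the set $\{T : s \in \mathcal{A}_e(T,\varepsilon)\}$ is Borel in $(L(X),\sigma(S_{op}))$; here one replaces the quantifier $\exists x \in B_X$ by $\exists x_n$ in the dense sequence, which is legitimate by density and continuity of the mean-difference maps. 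This realizes $T \mapsto \mathcal{A}_e(T,\tfrac1n)$ as a Borel map from $L(X)$ into the Polish space of trees on $\N$. Since the set $WF$ of well-founded trees is coanalytic, and $T$ is super-ergodic iff $\mathcal{A}_e(T,\tfrac1n)$ is well-founded for every $n$ (the reformulation of $(b)$), the set $\mathcal{SE}(X)$ is a countable intersection of preimages of $WF$ under Borel maps, hence coanalytic, giving $(b)$. For $(c)$, the classical fact that the tree-height $h$ is a $\Pi^1_1$-rank on $WF$, pulled back along these Borel maps and then combined via a supremum over $n$, yields that $\eta_e$ is a coanalytic rank on $\mathcal{SE}(X)$; I would invoke the standard stability of coanalytic ranks under Borel preimages and countable suprema.

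For $(d)$ I would use the boundedness theorem for coanalytic ranks: since $\mathcal{UE}(X)$ is Borel by Proposition~\ref{prop:Cesaro-borne borelien}, and $\mathcal{UE}(X) \subseteq \mathcal{SE}(X)$ (uniform ergodicity implies super-ergodicity, as recalled from \cite{yahdi3}), the set $\mathcal{UE}(X)$ is an analytic — indeed Borel — subset of the coanalytic set $\mathcal{SE}(X)$. By the $\Sigma^1_1$-boundedness principle applied to the coanalytic rank $\eta_e$, any analytic subset of $\mathcal{SE}(X)$ has $\eta_e$ bounded below $\omega_1$, so there exists $\alpha<\omega_1$ with $\eta_e(T)\leq\alpha$ for all $T \in \mathcal{UE}(X)$. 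Finally, $(e)$ is the general dichotomy for coanalytic ranks: a coanalytic set $\mathcal{SE}(X)$ is Borel if and only if the rank $\eta_e$ is bounded on it by some countable ordinal, i.e. $\eta_e(X)=\sup_{T\in\mathcal{SE}(X)}\eta_e(T)<\omega_1$; one direction is again $\Sigma^1_1$-boundedness (Borel $\Rightarrow$ bounded rank), and the converse follows because a bounded rank expresses $\mathcal{SE}(X)$ as $\{T : \eta_e(T)\leq\alpha\}$, a Borel set.

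The main obstacle I anticipate is the measurability step in $(b)$: I must verify carefully that replacing the existential quantifier over the full ball $B_X$ by the countable dense sequence $\{x_n\}$ preserves the tree $\mathcal{A}_e(T,\varepsilon)$ up to the harmless perturbation of $\varepsilon$, and that the resulting set-valued assignment is genuinely Borel for the $\sigma$-algebra $\sigma(S_{op})$ rather than merely for a finer topology. This requires the uniform $S_{op}$-continuity of the maps $T\mapsto A_{s_p}x - A_{s_{p+1}}x$ on norm-bounded pieces $L_k(X)$, followed by assembling the pieces over the decomposition $L(X)=\bigcup_k L_k(X)$ as in Proposition~\ref{prop:Cesaro-borne borelien}; once this coding is secured, parts $(c)$ through $(e)$ are essentially formal consequences of the descriptive-set-theoretic theory of $\Pi^1_1$-ranks.
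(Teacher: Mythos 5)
Your proposal is correct and follows essentially the same route as the paper's proof: part (a) from Theorem \ref{prop:indice arbre ergo}, Borelness of the sections $\{T : s \in \mathcal{A}_e(T,\varepsilon)\}$ obtained by trading the quantifier $\exists x \in B_X$ for a countable dense sequence (exact here, since the inequality is strict and $x \mapsto \norm{A_{s_p}x - A_{s_{p+1}}x}$ is continuous, so no perturbation of $\varepsilon$ is even needed), coanalyticity of well-foundedness with tree height as the $\Pi^1_1$-rank, and the boundedness properties of Lemma \ref{proprietes:rang coa} for parts (d) and (e). The only cosmetic divergence is that the paper amalgamates the trees $\mathcal{A}_e(T,\frac{1}{k})$ into a single tree $\mathcal{A}_e(T)$ whose root coordinate codes $k$, so that one application of Lemma \ref{prop:tree=>rang} yields $\eta_e = h \circ \mathcal{A}_e$ directly, whereas you keep the countably many trees separate and invoke the equally standard closure of coanalytic ranks under Borel preimages and countable suprema --- both packagings are valid and rest on the same descriptive-set-theoretic facts.
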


\begin{proof}

The assertion $(a)$ is part of the theorem \ref{prop:indice arbre
ergo}.
 Let  $T$ be a bounded operator on $X$ and $A_n$ its
$n^{th}$ Cesaro-mean.
 We can write that
 \[
\eta_e (T) = \sup_{n \in\N}
h\big{(}\mathcal{A}_e(T,\frac{1}{n})\big{)}.
\]
We need to construct  a tree on $\N$ that contains all
trees  $\mathcal{A}_e(T,\frac{1}{n})$  while keeping the
information  on the index $\eta_e(T)$. Let $\mathcal{A}_e(T)$ be
the tree formed by the finite sequences
 $s=(s_0,s_1,s_2,...) $ such that $s_0$
  covers $\N$ and $(s_1,s_2,...)$ covers the trees
$\mathcal{A}_e(T,\frac{1}{s_0})$; i.e., the tree
\[
 \Big{\{}\sigma \in \N^{\uparrow<\N}:
\valu{\sigma}=0 \hbox{ or } \sigma = (k,s) \in \N \times
\N^{\uparrow<\N} \hbox{ with } s \in \mathcal{A}_e(T,\frac{1}{k})
\Big{ \}}.
\]
Consider the map on  the $S_{op}$-Borel set of Cesaro-bounded
$L_{cb}(X)$,
\begin{eqnarray*}
\mathcal{A}_e :  L_{cb}(X) & \longrightarrow &  \{\hbox{Trees on} \N\} \\
                    T & \longmapsto &
\mathcal{A}_e(T).
\end{eqnarray*}

\begin{claim}\label{claim}
 Let $ \sigma \in \N^{<\N} $. Put $\overline{\sigma}=
\{T \in L_{cb}(X); \sigma \in \mathcal{A}_e(T) \}$. Then
$\overline{\sigma}$ is a
 $S_{op}$-Borel subset of $ L_{cb}(X)$.
\end{claim}

 Indeed, this is clear if  the length  of
$\valu{\sigma} \leq 2$ since in this case either
$\overline{\sigma}= L_{cb}(X)$ or $\overline{\sigma}= \emptyset $.
Let
 $\sigma=(k,s) \in \N \times \N^{<\N}$ with $\valu{s} >2 $.
If $s \notin \N^{\uparrow<\N}$ then $\overline{\sigma}=\emptyset$.
If $s \in \N^{\uparrow<\N}$, we have
\begin{align*}
\overline{\sigma}
 &= \Big{\{}T \in L_{cb}(X):
s \in \mathcal{A}_e(T,\frac{1}{k}) \Big{\}}\\
&= \Big{\{}T \in L_{cb}(X): \exists  x \in B_X, \forall 1 \leq p
<\valu{s}; ~ \norm{A_{s_p}x - A_{s_{p+1}}x}> \varepsilon \Big{\}}.
\end{align*}
Since $X$ is separable , let $\{x_n\}_{n \in \N} $ be  dense in
the unit ball   of $X$. Then,
\begin{align*}
\overline{\sigma} &= \Big{\{}T \in L_{cb}(X):  ~  \exists  n \in
\N, ~
  \forall 1 \leq p <\valu{s}; ~
\norm{A_{s_p}x_n - A_{s_{p+1}}x_n}> \varepsilon      \Big{\}}\\
&= \bigcup_{n\in \N}\bigcap_{p=1}^{\valu{s}-1} \Big{\{}T \in
L_{cb}(X):  ~
        \norm{A_{s_p}x_n - A_{s_{p+1}}x_n}> \varepsilon
        \Big{\}}.
\end{align*}
It follows from  the continuity of $T \mapsto T^n$ that
$\overline{\sigma} $ is $S_{op}$-Borel.
\bigskip

Lemma \ref{prop:tree=>rang} below  and  claim \ref{claim} imply
that the set
\[
C := \big{\{} T \in L_{cb}(X); \mathcal{A}_e(T) \hbox{ is well
bounded } \big{\}}
\]
is $S_{op}$-coanalytic in $L_{cb}(X)$ with a coanalytic rank  $h
\circ \mathcal{A}_e$; which maps $T$ to the height of the tree
$\mathcal{A}_e(T)$.
 On the other hand, by the definition of
$\mathcal{A}_e(T)$ and  theorem \ref{prop:indice arbre ergo},
\begin{align*}
C &= \Big{\{} T \in L_{cb}(X); \mathcal{A}(T,\frac{1}{n}) \hbox{
is well founded }
             \forall n \in \N^*\Big{\}}\\
&= \Big{\{} T \in L_{cb}(X); \mathcal{A}(T,\varepsilon ) \hbox{ is
well founded }
             \forall \varepsilon >0\Big{\}}\\
             &= \mathcal{SE}(X) \cap L_{cb}(X)\\
             &= \mathcal{SE}(X).
\end{align*}
Therefore, the set $ \mathcal{SE}(X) $  is
 $S_{op}$-coanalytic in $ L_{cb}(X) $ and thus in $ L(X)$ because
$ L_{cb}(X) $ is a  $S_{op}$-Borel subset of $L(X)$, which proves $(b)$. The index $
\eta_e $ is then a coanalytic rank on $ \mathcal{SE}(X) $ since $
\eta_e = h \circ \mathcal{A}_e $, thus proving $(c)$. The assertions $(d)$ and $(e)$
of the theorem
 follow from  the lemma
\ref{proprietes:rang coa} below on coanalytic ranks.

\end{proof}

Below are adaptations, in particular to the
topology on the set of all trees on $\N$, of classical results of descriptive set
theory used in the proof above   (see \cite{kechrisLouveau} and
\cite{zins}).

\begin{lem} \label{prop:tree=>rang}
Let  $P$ be a Polish space and $\psi$ be a map from  $P$ into the
set of all trees on $\N$.
If for all $s \in \N^{<\N}$, the set $\bar{s}=\{x \in P: s \in
\psi(x)\}$ is Borel, then the set $ C=\{x \in P;  \psi(x) \hbox{
well founded }\} $ is coanalytic  in $P$ with $h\circ \psi$ as
coanalytic rank.
\end{lem}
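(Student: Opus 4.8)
The plan is to reduce the statement to the classical fact that, on the Polish space $\mathrm{Tr}$ of all trees on $\N$, the set $\mathrm{WF}$ of well-founded trees is coanalytic and that the height function $h$ is a coanalytic rank on it (see \cite{kechrisLouveau}, \cite{Kechris}). First I would fix the standard Polish topology on $\mathrm{Tr}$: identifying a tree $\tau \subseteq \N^{<\N}$ with its indicator in $2^{\N^{<\N}}$, the collection $\mathrm{Tr}$ consists of the indicators of the subsets closed under initial segments, which is a closed condition in the product topology; hence $\mathrm{Tr}$ is a closed, and therefore Polish, subspace of $2^{\N^{<\N}}$. Its Borel structure is generated by the basic clopen sets $V_s := \{\tau \in \mathrm{Tr} : s \in \tau\}$ as $s$ ranges over $\N^{<\N}$.

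The key observation is that the hypothesis forces $\psi$ to be Borel. Indeed, for each $s \in \N^{<\N}$ we have $\psi^{-1}(V_s) = \bar{s}$, which is Borel by assumption; since the sets $V_s$ generate the Borel $\sigma$-algebra of $\mathrm{Tr}$, the map $\psi \colon P \to \mathrm{Tr}$ is Borel. Consequently $C = \{x \in P : \psi(x) \text{ is well founded}\} = \psi^{-1}(\mathrm{WF})$ is the Borel preimage of a coanalytic set. Because the coanalytic pointclass is closed under Borel preimages, $C$ is coanalytic in $P$, which is the first conclusion.

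For the rank part, I would recall the classical theorem that $h \colon \mathrm{WF} \to \omega_1$ is a coanalytic rank: there are an analytic relation $\le_{\Sigma}$ and a coanalytic relation $\le_{\Pi}$ on $\mathrm{Tr} \times \mathrm{Tr}$ such that for every $\tau' \in \mathrm{WF}$ and every $\tau \in \mathrm{Tr}$, $\tau \le_{\Sigma} \tau' \iff \tau \le_{\Pi} \tau' \iff \bigl(\tau \in \mathrm{WF} \text{ and } h(\tau) \le h(\tau')\bigr)$. Pulling these back along the Borel map $\psi \times \psi$, I would set $x \le_{\Sigma}' y \iff \psi(x) \le_{\Sigma} \psi(y)$ and $x \le_{\Pi}' y \iff \psi(x) \le_{\Pi} \psi(y)$. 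Since Borel preimages preserve both the analytic and coanalytic pointclasses, $\le_{\Sigma}'$ is analytic and $\le_{\Pi}'$ is coanalytic, and for every $y \in C$ both relations reduce to $x \in C \text{ and } (h \circ \psi)(x) \le (h \circ \psi)(y)$. This exhibits $h \circ \psi$ as a coanalytic rank on $C$, completing the proof.

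The main obstacle is not a single hard estimate but keeping the pointclass bookkeeping clean: verifying that $\psi$ is genuinely Borel into $\mathrm{Tr}$ from the single-generator hypothesis on the $\bar{s}$, and, more delicately, transferring the two-relation definition of a coanalytic rank through $\psi$ so that the analytic and coanalytic bounds on $\le_{\Sigma}$ and $\le_{\Pi}$ survive the pullback and still coincide exactly on the fibre over $\mathrm{WF}$.
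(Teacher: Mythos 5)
Your proposal is correct and is essentially the proof the paper intends: the paper states this lemma without argument, presenting it as an adaptation of the classical facts (citing \cite{kechrisLouveau} and \cite{zins}; see also \cite{Kechris}) that the set of well-founded trees is coanalytic in the Polish space of trees on $\N$ and that the height is a coanalytic rank on it. Your writeup supplies exactly that adaptation — deducing Borelness of $\psi$ from the hypothesis on the sets $\bar{s}$, and pulling back both the set $\mathrm{WF}$ and the two rank relations along the Borel map $\psi$ — so there is no substantive divergence from the paper's route.
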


\begin{lem} \label{proprietes:rang coa}
Let $\delta$ be a coanalytic rank on a
 coanalytic subset $C$ of a Polish space $P$. Then:
\begin{enumerate}[(a)]
  \item $\forall \alpha < \omega_1, ~~ B_\alpha:= \{x \in C;
\delta(x) \leq \alpha\}$ is a Borel set.
  \item If $A \subseteq C$ is analytic, then  $\exists \alpha <
\omega_1$ such that $A \subseteq B_\alpha$.
  \item $C$ is Borel if and only if
$\delta$ is bounded on $C$ by a countable ordinal.
\end{enumerate}

\end{lem}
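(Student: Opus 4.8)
The plan is to use the standard machinery of coanalytic ranks. Recall that, by definition, a coanalytic rank $\delta$ on the coanalytic set $C$ comes equipped with two relations on $P\times P$, an analytic relation $\leq^{\Sigma}_{\delta}$ and a coanalytic relation $\leq^{\Pi}_{\delta}$ (together with their strict analogues $<^{\Sigma}_{\delta}$, $<^{\Pi}_{\delta}$), such that for every fixed $y\in C$ one has
\[
x \leq^{\Pi}_{\delta} y \iff x \leq^{\Sigma}_{\delta} y \iff \big(x\in C \text{ and } \delta(x)\leq\delta(y)\big),
\]
and likewise for the strict versions. All three assertions will be read off from these relations plus one genuinely deep input, the boundedness theorem; everything else is bookkeeping with Suslin's theorem (a set that is both analytic and coanalytic is Borel).

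For $(a)$ I would argue by transfinite induction on $\alpha<\omega_1$. Assuming $B_\eta$ is Borel for all $\eta<\alpha$, the set $B_{<\alpha}=\bigcup_{\eta<\alpha}B_\eta$ is a countable union of Borel sets, hence Borel, since $\alpha$ is countable. If no element of $C$ has rank exactly $\alpha$, then $B_\alpha=B_{<\alpha}$ and we are done. Otherwise fix $x_0\in C$ with $\delta(x_0)=\alpha$ and note that
\[
B_\alpha=\{x : x\leq^{\Sigma}_{\delta}x_0\}=\{x : x\leq^{\Pi}_{\delta}x_0\}.
\]
The first description exhibits $B_\alpha$ as analytic (a section of an analytic set) and the second as coanalytic, so by Suslin's theorem $B_\alpha$ is Borel, closing the induction.

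For $(b)$ the plan is to transfer the statement to a well-founded relation and invoke the Kunen--Martin boundedness theorem. Define on $P$ the relation $x\prec y \iff (x\in A \text{ and } y\in A \text{ and } x<^{\Sigma}_{\delta}y)$. Since $A$ is analytic and $<^{\Sigma}_{\delta}$ is analytic, $\prec$ is an analytic relation whose field lies in $A$; and because for $x,y\in A\subseteq C$ one has $x\prec y \iff \delta(x)<\delta(y)$, the relation $\prec$ is well founded. The Kunen--Martin theorem then forces its rank to be a countable ordinal. A short computation identifies this rank with the order type of the value set $\delta(A)\subseteq\omega_1$: if $\delta$ were unbounded on $A$, then $\delta(A)$ would be cofinal in $\omega_1$ and hence, as $\mathrm{cf}(\omega_1)=\omega_1$, of order type $\omega_1$, making the rank of $\prec$ equal to $\omega_1$ and contradicting Kunen--Martin. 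Therefore $\sup_{x\in A}\delta(x)<\omega_1$. Finally, $(c)$ follows by combining the two: if $\delta$ is bounded on $C$ by some $\alpha<\omega_1$ then $C=B_\alpha$ is Borel by $(a)$; conversely, if $C$ is Borel it is in particular analytic, so $(b)$ applied with $A=C$ yields a countable bound on $\delta$ over $C$. I expect the well-foundedness-plus-Kunen--Martin step in $(b)$ to be the main obstacle, as it is the only place where a substantive descriptive-set-theoretic theorem (rather than routine closure under the operations involved) is needed.
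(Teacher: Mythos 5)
Your proof is correct and is precisely the classical argument: Suslin's theorem applied to the two rank relations $\leq^{\Sigma}_{\delta}$, $\leq^{\Pi}_{\delta}$ for $(a)$, the Kunen--Martin well-founded-relation boundedness theorem for $(b)$, and the combination for $(c)$. The paper itself gives no proof of this lemma, citing it as a classical result (Kechris--Louveau), and your write-up is exactly the standard proof found in those references, so there is nothing to fault and no divergence to report.
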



\bigskip
 The topological hierarchy of these families of
operators makes it possible to put strong limits on possible
relationships among using the index $\eta_e(X)$  of a
space $X$ or the rank $\eta_e(T)$ of an operator $T$.
This generates families of Banach spaces with a desired relationship instead of just individual examples.
 In
particular, if $\eta_e(X)= \omega_1$, then the set of super-ergodic
operators strictly separates  the sets of ergodic and uniformly
ergodic operators. Moreover, not only is an operator $T$ super-ergodic
when its rank $\eta_e(T)$ is countable,   we also have the interesting
dichotomy below. It is an  application of the facts that $\eta_e$ is a
coanalytic rank and the set of ergodic operator is Borel.

\begin{cor}\label{dichotomy}
For every  separable Banach space $X$ exactly one of the following holds:
\begin{itemize}
  \item either there exists an ergodic but not super-ergodic operator on
  $X$,
  \item either there exists  a countable ordinal
$\alpha$
 such that for any ergodic operator $T$ on $X$,  ~
$\eta_e(T) \leqslant \alpha$.
\end{itemize}
\end{cor}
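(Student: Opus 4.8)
The plan is to derive the dichotomy directly from the two structural facts singled out in the remark preceding the statement: that $\mathcal{E}(X)$ is Borel (Proposition \ref{prop:Cesaro-borne borelien}) and that $\eta_e$ is a coanalytic rank on $\mathcal{SE}(X)$ (Theorem \ref{theoreme:rang coa tree ergo}(c)). The engine of the proof is the boundedness property of coanalytic ranks, namely Lemma \ref{proprietes:rang coa}(b).

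First I would check that the two alternatives are mutually exclusive, which is what justifies the word \emph{exactly}. If the first holds, there is an ergodic operator $T_0$ that is not super-ergodic; by the extension of the index, $\eta_e(T_0)=\omega_1$, so no countable ordinal can bound $\eta_e$ on the ergodic operators and the second alternative fails. Conversely, if the second holds with some bound $\alpha<\omega_1$, then every ergodic $T$ satisfies $\eta_e(T)\leq\alpha<\omega_1$, hence is super-ergodic by Theorem \ref{theoreme:rang coa tree ergo}(a); thus no ergodic non-super-ergodic operator exists and the first alternative fails.

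The heart of the argument is to show that \emph{at least} one alternative holds. I would proceed by a direct conditional: assume the first alternative fails, i.e. there is no ergodic operator that is not super-ergodic. This is precisely the inclusion $\mathcal{E}(X)\subseteq\mathcal{SE}(X)$. Since $\mathcal{E}(X)$ is Borel, it is in particular an analytic subset of the coanalytic set $\mathcal{SE}(X)$ on which $\eta_e$ is a coanalytic rank. Lemma \ref{proprietes:rang coa}(b) then furnishes a countable ordinal $\alpha$ with $\mathcal{E}(X)\subseteq B_\alpha=\{T\in\mathcal{SE}(X);\ \eta_e(T)\leq\alpha\}$, that is, $\eta_e(T)\leq\alpha$ for every ergodic $T$. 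This is exactly the second alternative, completing the dichotomy.

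The step I expect to be the only real point of care is the translation carried out at the start of the last paragraph: recognizing that the negation of the first alternative is the set-theoretic containment $\mathcal{E}(X)\subseteq\mathcal{SE}(X)$, which is what turns the Borel (hence analytic) class of ergodic operators into an analytic subset of $\mathcal{SE}(X)$ and thereby triggers the boundedness of the coanalytic rank. Everything else is bookkeeping with the identities that $\eta_e(T)<\omega_1$ is equivalent to $T$ being super-ergodic, while $\eta_e(T)=\omega_1$ otherwise.
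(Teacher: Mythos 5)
Your proof is correct and follows exactly the route the paper intends: the corollary is stated there as a direct application of the facts that $\mathcal{E}(X)$ is Borel and $\eta_e$ is a coanalytic rank, with the boundedness property (Lemma \ref{proprietes:rang coa}(b)) applied to the inclusion $\mathcal{E}(X)\subseteq\mathcal{SE}(X)$ when the first alternative fails. Your additional verification that the two alternatives are mutually exclusive, via $\eta_e(T_0)=\omega_1$ for a non-super-ergodic $T_0$, is a sound filling-in of a detail the paper leaves implicit.
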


In particular, for any Banach space that contains a complemented
$\ell^1(\N)$, there exists ergodic operators with arbitrary large
ordinals. Indeed, the example 3.3 in \cite{yahdi3} shows that the
left-shift operator $S$ on $ \ell^1(\N)$  is ergodic since $\lim_{n
\to \infty} \norm{S^nx} = 0 $, but it is not super-ergodic since the
$\{A^\mathcal{U}_n(\bar{e})\}_{n \in \N}$ is not Cauchy; where
$A^\mathcal{U}_n$ is the $n^{th}$ Cesaro-mean of $S_\mathcal{U}$ and
$\bar{e}$ is  the classical canonical basis $(e_k)_{k \in \N}$.


\end{document}